\documentclass[11pt,a4paper]{amsart}

\usepackage[margin=1in]{geometry}
\usepackage{amsmath,amssymb}
\usepackage{microtype}
\usepackage{xcolor, hyperref}

\newtheorem{theorem}{Theorem}
\newtheorem{lemma}[theorem]{Lemma}
\newtheorem{conj}[theorem]{Conjecture}

\newcommand{\Sym}{\mathcal S}
\newcommand{\e}{\mathrm e}

\def\tref#1{Theorem~$\ref{#1}$}
\def\cjref#1{Conjecture~$\ref{#1}$}

\renewcommand{\le}{\leqslant}

\title{Avoiding a partial Latin square with an isotope of a given Latin square}
\author{Stijn Cambie \and Ian M. Wanless}

\thanks{Department of Computer Science, KU Leuven Campus Kulak-Kortrijk, 8500 Kortrijk, Belgium. Research supported by FWO grants 1225224N and 1215727N. E-mail: \protect\href{mailto:stijn.cambie@hotmail.com}{\protect\nolinkurl{stijn.cambie@hotmail.com}}}

\thanks{School of Mathematics, Monash University, Vic 3800, Australia. Research supported by Australian Research Council Discovery Project DP250101611. E-mail: \protect\href{mailto:ian.wanless@monash.edu}{\protect\nolinkurl{ian.wanless@monash.edu}}}

\date{}

\begin{document}

\maketitle

\begin{abstract}
For all sufficiently large $n$, we prove that every Latin square of order
$n$ can be made to avoid any prescribed partial Latin square of order $n$
by permuting only its rows and columns.  The proof combines Hall's theorem
with elementary estimates for a random permutation.
\end{abstract}

\section{Introduction}

A \emph{partial Latin square} of order $n$ is an $n\times n$ array, whose cells are either empty or contain one symbol from a set of size $n$, such that no symbol occurs more than once in any row or column. If no cell is empty the partial Latin square is a \emph{Latin square}.  We will use $[n]=\{1,\dots,n\}$ for our set of symbols, so that it matches the indices of the rows and columns. Two partial Latin squares \emph{avoid}
one another if they have different symbols in every cell that is filled in both partial Latin squares. A Latin square $L=[L_{ij}]$ is a \emph{completion} of a partial Latin square $P=[P_{ij}]$ if $L_{ij}=P_{ij}$ whenever $P_{ij}$ is defined.

Let $\Sym_n$ denote the symmetric group acting on $[n]$. There is a natural action of $\Sym_n^3$ on Latin squares, where three 
permutations permute the rows, columns and symbols respectively. This action is called \emph{isotopism} and its orbits are \emph{isotopism classes}. An isotopism is \emph{principal} if it applies the identity permutation to symbols, and the result is a \emph{principal isotope} of the original Latin square.

The purpose of this note is to prove  that for any partial Latin square $P$ and Latin square $L$ of sufficiently large order, $P$ is avoided by at least one principal isotope of $L$. In practice what we show is that $P$ can be avoided by an isotope of $L$ where we take the column permutation to be trivial. To see that this is equivalent to the claimed result, simply write $P$ and $L$ as sets of (row,column,symbol) triples, then exchange the last two coordinates.

\begin{theorem}\label{thm:main}
For all sufficiently large $n$, if $P$ is a partial Latin square and $L$ is a
Latin square of order $n$, then there are permutations
$\pi,\sigma\in\Sym_n$ such that
\[\sigma\bigl(L(\pi(i),j)\bigr)\ne P(i,j)
\]
for every filled cell $(i,j)$ of $P$.
\end{theorem}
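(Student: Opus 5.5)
We choose $\pi$ uniformly at random, and then look for $\sigma$ via Hall's theorem, exactly as the abstract suggests. Fix $\pi$. For each symbol $s$ of $L$ and each symbol $t$ of $P$, say that $s$ \emph{conflicts} with $t$ if there is a filled cell $(i,j)$ of $P$ with $P(i,j)=t$ and $L(\pi(i),j)=s$. A valid $\sigma$ is then a perfect matching in the bipartite graph $G_\pi$ on $[n]\times[n]$ in which $s$ is joined to $t$ precisely when they do not conflict.

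The structure of $P$ bounds the number of conflicts. Symbol $t$ occupies at most $n$ cells of $P$, so $t$ conflicts with at most $n$ symbols $s$. This bound is too weak on its own, since it gives only nonnegative degree. Let $X_{st}$ be the number of cells $(i,j)$ with $P(i,j)=t$ and $L(\pi(i),j)=s$. For a fixed cell $(i,j)$ and a fixed symbol $s$, we have $L(\pi(i),j)=s$ exactly when $\pi(i)$ equals the unique row $r$ with $L(r,j)=s$. This happens with probability $1/n$. Since symbol $t$ lies in at most $n$ cells, $\mathbb E X_{st}\le 1$, and summing over $s$ gives $\sum_s X_{st}\le n$.

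The key estimate is that, with positive probability, every $t$ conflicts with at most $(1-c)n$ symbols $s$, and symmetrically every $s$ conflicts with at most $(1-c)n$ symbols $t$. Call this property (A). For a fixed pair $(s,t)$, the event $X_{st}=0$ behaves like the event that a random permutation avoids a partial permutation with at most $n$ entries. The cells of $t$ lie in distinct rows $i$ and distinct columns $j$, and they force $\pi(i)\neq r_{j}(s)$, where the values $r_j(s)$ are distinct. The probability of this is at least roughly $(1-1/n)^n\approx \e^{-1}$, by standard derangement-type estimates for a random permutation. So the expected number of non-conflicting $s$ for each $t$ is at least about $n/\e$. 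We then need concentration. For this we would apply a martingale (Azuma/McDiarmid) bound for functions of a random permutation, since swapping two values of $\pi$ changes the count by $O(1)$. This would give deviations of order $\sqrt{n\log n}$, and a union bound over the $2n$ choices of $s$ or $t$ then yields property (A) with, say, $c=1/3$. Both sides of $G_\pi$ then have minimum degree at least $n/3$.

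Minimum degree $n/3$ alone does not give Hall's condition; we would need degree at least $n/2$. So we also need control on the structure of $G_\pi$, which is the main obstacle. We would handle it by verifying Hall's condition directly. For $|S|\le n/3$, Hall holds by the degree bound on $s$. For $|S|\ge 2n/3$, every $t$ has at least $n/3$ neighbours, so every $t$ has a neighbour in $S$ and $N(S)=[n]$. For the middle range, we would instead bound the number of conflicting pairs across $S\times T$. The total number of conflicting pairs $(s,t)$ is at most the number of filled cells, which is at most $n^2$. We would show that a Hall violator forces $S\times([n]\setminus N(S))$ to consist entirely of conflicts, that is, a conflict-complete rectangle with $|S|+|[n]\setminus N(S)|>n$. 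We would rule out such rectangles with high probability by a first-moment argument: fix rectangles of side lengths $a+b>n$ with $a,b\ge n/3$, and use the fact that each pair is conflicted with probability bounded away from $1$ with enough independence. If these dense-rectangle estimates proved too delicate, the fallback would be to improve the per-vertex estimate to degree $>n/2$. We would do this by noting that conflicts for $t$ number at most $\sum_s \min(X_{st},1)$ while $\sum_s X_{st}\le n$, and that collisions (pairs with $X_{st}\ge2$) are typically numerous enough to push the non-conflict count above $n/2$. This would require a second-moment computation of $\sum_s\binom{X_{st}}{2}$.
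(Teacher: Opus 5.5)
Your framework matches the paper's: a uniformly random row permutation, the conflict graph, the bound $\Pr(X_{st}=0)\ge \e^{-1}-o(1)$ made uniform by bounded differences and a union bound, and the reduction of a Hall violator $S$ to a conflict-complete rectangle $S\times T$ with $|S|+|T|>n$, so both sides exceed roughly $\e^{-1}n$ (or $n/3$ in your version). The gap is the step you flag as the main obstacle, and neither of your suggestions closes it.

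\textbf{The first-moment route.} It needs, for each fixed $S\times T$, a probability of full conflict below about $\binom{n}{|S|}^{-1}\binom{n}{|T|}^{-1}$, which can be roughly $4^{-n}$. All pairs $(s,t)$ are driven by the single permutation $\pi$, so there is no independence to appeal to. The bounded-differences inequality only gives tails $\exp(-cn)$ with a small constant $c$, far too weak to beat the number of rectangles.

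\textbf{The fallback.} It cannot work. If $P$ is itself a Latin square, every symbol $t$ fills $n$ cells. Then each matching $\{(i,r_j(s)):P(i,j)=t\}$ is perfect, so $\Pr(X_{st}=0)\to\e^{-1}$. Hence the non-conflict degree of $t$ is $(\e^{-1}+o(1))n<n/2$ with high probability. The collisions you hope to exploit are already accounted for in this count. Your observation that there are at most $n^2$ conflicting pairs is vacuous.

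\textbf{The missing idea.} You need to exclude large conflict-complete rectangles deterministically, using information about a bounded number of vertices at a time, so that only polynomially many random variables must be controlled.

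For a $P$-symbol $t$ and distinct $L$-symbols $s_1,s_2,s_3$, the matchings $M_{t,s_k}=\{(i,r_j(s_k)):P(i,j)=t\}$ are pairwise edge-disjoint. A common edge $(i,u)$ determines the unique cell $(i,j)$ of $t$ in row $i$, and then $L(u,j)=s_k$ forces the $s_k$ to coincide.

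Extending your derangement estimate by inclusion--exclusion, a uniformly random permutation meets each of $r$ fixed pairwise edge-disjoint matchings with probability at most $(1-\e^{-1})^r+o(1)$, uniformly in the matchings. With $r=3$, the number of $t$ conflicting with all of $s_1,s_2,s_3$ has expectation at most $((1-\e^{-1})^3+o(1))n$. Bounded differences and a union bound over fewer than $n^3$ triples make this hold for all triples simultaneously, together with the degree bounds.

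Now suppose $S\times T$ is conflict-complete with $|S|,|T|>(\e^{-1}-\varepsilon)n$. Taking any three elements of $S$ gives $|T|\le((1-\e^{-1})^3+\varepsilon)n$. This contradicts $(1-\e^{-1})^3\approx 0.253<\e^{-1}$; even your cruder lower bound $n/3$ would suffice here. Pairs would not be enough, since $(1-\e^{-1})^2\approx 0.400>\e^{-1}$, which is why the argument uses three vertices.
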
 

Andr\'en, Casselgren and Markstr\"om~\cite{ACM} proved the following powerful result, a special case of which is that every partial Latin square of sufficiently large order is avoided by some Latin square.  

\begin{theorem}\label{thm:ACM}
There are positive constants $\alpha$ and $\beta$ such that the following is true for every positive integer $n$. Suppose that $P$ is a partial Latin square of order $n$ in which each row and column has at most $\alpha n$ non-empty cells. 
Suppose that $A$ is an $n\times n$ array of subsets of $[n]$, each of size at most $\beta n$, in which
each symbol occurs at most $\beta n$ times among the sets in each row and column. Then if no cell of $P$ contains a symbol that occurs in the corresponding cell of $A$, then there is a completion of $P$ that avoids $A$.
\end{theorem}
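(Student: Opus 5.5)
The plan is to start from a suitably randomised ``host'' Latin square and repair its defects one at a time using small local trades. Concretely, I would take $n$ (assume first $n$ even, or work with the Cayley table of $\mathbb{Z}_n$ in general) and let $L$ be a uniformly random isotope of the Cayley table of $\mathbb{Z}_n$. A cell $(i,j)$ is \emph{bad} if either $P_{ij}$ is defined and $L_{ij}\ne P_{ij}$, or $L_{ij}\in A_{ij}$. For a fixed row, the expected number of bad cells is at most $\alpha n + \beta n$. This is because a random isotope places each symbol in a given cell with probability $1/n$, and each row meets at most $\alpha n$ cells of $P$ and at most $\beta n\cdot n$ symbol--cell incidences of $A$. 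Martingale or Chernoff-type concentration then lets me fix an $L$ in which every row, every column and every symbol class has at most $2(\alpha+\beta)n$ bad cells.

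The second step is to fix the bad cells sequentially. To place symbol $s=P_{ij}$ in cell $(i,j)$, or to remove a forbidden symbol from $(i,j)$, I would use the standard \emph{switching} trades in a group table. In $\mathbb{Z}_n$, the cells containing a pair of symbols $\{a,b\}$ in two rows form row cycles. Exchanging along a $2\times2$ configuration, or more generally along a small bounded-size trade built from a few such cycles, changes a bounded number of cells. I would fix a family of trades of bounded size $t$ through $(i,j)$ that put a desired symbol there. This family is parametrised by choices of auxiliary rows, columns and symbols, giving $\Omega(n^2)$ essentially disjoint candidates.

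A candidate trade is \emph{admissible} if it satisfies three conditions. First, it touches no cell of $P$ other than $(i,j)$. Second, it uses no cell previously modified or marked bad. Third, it places no symbol into a cell where that symbol lies in $A$. Each of these conditions kills at most $O((\alpha+\beta+\gamma)n^2)$ candidates, where $\gamma n$ bounds the number of cells already altered per row, column and symbol. To control $\gamma$, I would process the bad cells in an order that spreads the load, so that each line receives only $O((\alpha+\beta)n)$ alterations in total. For $\alpha,\beta$ small enough an admissible trade always exists, and afterwards the resulting square completes $P$ and avoids $A$.

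The main obstacle is this last counting step: exhibiting a concrete trade family in $\mathbb{Z}_n$ that is rich enough, with $\Theta(n^2)$ nearly disjoint members through each cell, for every parity of $n$. It must also be verified that earlier alterations, which destroy the group structure locally, do not invalidate later trades. I would handle this by using only trades whose cells are untouched, since then the local group structure they rely on is intact. The price is that the bookkeeping of ``touched'' cells per row, column and symbol must be kept below a small constant times $n$ throughout. If a direct greedy argument proves too lossy, I would first try randomising the choice of admissible trade at each step, and then apply a second concentration bound to keep every line's load small.
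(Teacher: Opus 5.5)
The paper gives no proof of this statement; it quotes it from Andr\'en, Casselgren and Markstr\"om. As transcribed, the statement is actually false, and your outline silently relies on the hypothesis that was dropped.

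\textbf{The missing hypothesis.} In the original theorem $P$ is \emph{$\alpha$-dense}: besides the row and column bounds, each \emph{symbol} occurs at most $\alpha n$ times in $P$. Without that, take $P_{ii}=1$ for $1\le i\le n-1$, $P_{nn}=2$, and all $A_{ij}=\varnothing$. Every row and column of $P$ has one filled cell. Yet in any completion the $1$ in row $n$ would have to lie in column $n$, which is occupied. (The paper only uses the case where $P$ is empty, so its results are unaffected.)

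In your plan, the missing hypothesis is exactly what the per-symbol bookkeeping needs. Each repair of a cell with $P_{ij}=s$ must remove $s$ from its current cell in row $i$, so it alters another cell containing $s$. Hence ``every symbol line receives $O((\alpha+\beta)n)$ alterations'' requires every symbol to be prescribed only $O(\alpha n)$ times. In the example, symbol $1$ needs about $n$ repairs, and the procedure must get stuck because no completion exists.

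\textbf{The trade family.} Even with the hypothesis restored, the step you call the main obstacle cannot be done with $\mathbb{Z}_n$ as host, and it is the real content of the theorem.
For odd $n$, the Cayley table of $\mathbb{Z}_n$ has no intercalates. For prime $n=p$, every row, column and symbol cycle has length $p$. Worse, the smallest Latin trade in $\mathbb{Z}_p$ has size tending to infinity with $p$.

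To see this, suppose a trade of a fixed combinatorial type embedded in $\mathbb{Z}_p$ for infinitely many primes. Comparing the fixed integer linear system over $\mathbb{F}_p$ and over $\mathbb{Q}$ would then embed it in the Cayley table of $(\mathbb{Z},+)$. That table contains no finite trade: take the trade cell with largest row and, within that row, largest column; following its symbol through the mate forces a trade cell in a larger row.

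So there is no family of bounded-size trades through a cell, let alone $\Omega(n^2)$ of them. Trades of size $\Theta(\log n)$ do not rescue the argument. With $\Theta(\alpha n^2)$ repairs, they cost about $\alpha n\log n$ alterations per line, far above your budget $\gamma n$. For even $n$, the intercalates of $\mathbb{Z}_n$ join only rows and columns at distance $n/2$, so each cell lies in exactly one.

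Proofs of this type, going back to Chetwynd and H\"aggkvist, instead start from a host square with a rich supply of small trades. Such a host is typically assembled from $2\times 2$ subsquares, with extra work needed for odd $n$. In it, any cell can be given any symbol by a bounded sequence of swaps, with many essentially disjoint choices. Your random isotope and load-balanced repair are the standard outer shell. What is absent is the construction of a suitable host and its trade family, and the counting showing that admissible trades survive. So what you have is an outline rather than a proof.
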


In \tref{thm:ACM}, a Latin square $L=[L_{ij}]$ avoids $A=[A_{ij}]$ if $L_{ij}\notin A_{ij}$ for all $i,j\in[n]$.

A Latin cube of order $n$ is an $n\times n\times n$ array in which every axis parallel line is a permutation of the same set of $n$ symbols. When building a Latin cube one layer at a time, each new layer needs to be a Latin square avoiding each of the previous layers. It is possible to become stuck (i.e.~not be able to add any new layer) by the time that half of the layers have been added \cite{BCMPW12}. It is an interesting open question as to 
whether it is possible to get stuck earlier, although \tref{thm:ACM} shows that there is a linear lower bound on the number of layers before this can happen. \tref{thm:main} shows that when building a Latin cube of large order layer by layer, the second layer can be chosen from any desired isotopism class. Another application of \tref{thm:main} will be found in the forthcoming paper \cite{CLMW}.

The proof of \tref{thm:main} rests on the following
idea.  Randomly permute the rows of $L$, and record in a bipartite
\emph{conflict graph} which symbols of $L$ can meet which symbols of $P$.
An avoiding relabelling of the symbols is precisely a perfect matching in
the bipartite complement.  If this matching does not exist, Hall's theorem
produces a large complete bipartite subgraph of the conflict graph.  We rule
this out by controlling degrees and triple common neighbourhoods.  The
numerical heart of the proof is the inequality
$
             (1-\e^{-1})^3<\e^{-1}.
$ 

\section{Random permutations}

In this section we prove some properties of random permutations that we need. We start with a lemma about the probability of a random permutation hitting disjoint matchings.

\begin{lemma}\label{lem:hit}
Fix $r$.  Let $M_1,\dots,M_r$ be pairwise edge-disjoint matchings in $K_{n,n}$, and let $\Pi$ be a uniformly random perfect matching.
Then, uniformly in the choice of the $M_i$,
\[
 \Pr\big(\Pi\cap M_i\ne\varnothing\text{ for every }i\in[r]\big)
   \le\bigl(1-\e^{-1}\bigr)^r+o(1).
\]
\end{lemma}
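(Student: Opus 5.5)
Since $r$ is fixed, I will pass to complements, apply inclusion--exclusion, and compute each term by counting rook placements. For $S\subseteq[r]$ let $A_S$ be the event that $\Pi$ is disjoint from $\bigcup_{i\in S}M_i$. Inclusion--exclusion gives
\[
\Pr\big(\Pi\cap M_i\ne\varnothing\ \forall i\big)=\sum_{S\subseteq[r]}(-1)^{|S|}\Pr(A_S).
\]
So it suffices to show $\Pr(A_S)\le \prod_{i\in S}\e^{-|M_i|/n}+o(1)$ for each $S$, together with a matching lower bound up to $o(1)$. That would give
\[
\Pr(\cdots)=\prod_{i=1}^r\bigl(1-\e^{-m_i/n}\bigr)+o(1)\le(1-\e^{-1})^r+o(1),
\]
using $m_i=|M_i|\le n$. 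The signs in inclusion--exclusion mean we need each $\Pr(A_S)$ up to $o(1)$ in both directions, not just an upper bound. Since $r$ is fixed there are only $2^r$ terms, so the $o(1)$ errors add up harmlessly.

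The key estimate is that for a set $F$ of edges which is a union of $k\le r$ edge-disjoint matchings, so that $F$ has maximum degree at most $k$, we have $\Pr(\Pi\cap F=\varnothing)=\e^{-|F|/n}+o(1)$, uniformly in $F$. I would prove this by a second inclusion--exclusion, counting rook placements. Write $N_j$ for the number of $j$-edge matchings inside $F$. Then
\[
\Pr(\Pi\cap F=\varnothing)=\sum_{j}(-1)^j N_j\,\frac{(n-j)!}{n!}.
\]
Bonferroni inequalities let me truncate at a fixed even or odd $J$ with error controlled by the $J$-th term. For fixed $j$, $N_j=\binom{|F|}{j}-O(|F|^{j-1}k)$, because the number of $j$-sets of edges of $F$ that share a vertex is at most $|F|^{j-1}\cdot 2k\cdot j^2$. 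Hence $N_j(n-j)!/n!=\frac{(|F|/n)^j}{j!}+O(1/n)$, using $|F|\le kn\le rn$. Summing up to $J$ and letting $J\to\infty$ slowly gives the claim, since the Taylor tail of $\e^{-x}$ for $x\le r$ is small uniformly.

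The step I expect to need the most care is uniformity, because $|F|/n$ ranges over $[0,r]$ and $\e^{-x}$ is not approximated by short alternating sums uniformly unless $J$ is large in terms of $r$. This is fine: fix $\varepsilon>0$ and choose $J=J(r,\varepsilon)$ with $r^J/J!<\varepsilon$. The Bonferroni truncation error is then at most $N_J(n-J)!/n!\le (rn)^J/(J!\,n^J)(1+o(1))<2\varepsilon$. The remaining discrepancies are $O_J(1/n)$, and all bounds depend only on $r$, $J$ and $n$, not on the $M_i$.

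Finally, I would apply the key estimate with $F=\bigcup_{i\in S}M_i$. Edge-disjointness gives $|F|=\sum_{i\in S}m_i$, so $\Pr(A_S)=\prod_{i\in S}\e^{-m_i/n}+o(1)$. Inclusion--exclusion then factors as $\prod_i(1-\e^{-m_i/n})+o(1)$, and each factor is at most $1-\e^{-1}$ because $m_i\le n$.
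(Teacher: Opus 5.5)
Your proposal is correct and follows essentially the same route as the paper: inclusion--exclusion over subsets $S\subseteq[r]$, then the key estimate $\Pr(\Pi\cap F=\varnothing)=\e^{-|F|/n}+o(1)$ uniformly, proved by counting $j$-edge matchings in the bounded-degree graph $F$ and truncating with Bonferroni. Edge-disjointness then gives $\sum_S(-1)^{|S|}\prod_{i\in S}\e^{-m_i/n}=\prod_i\bigl(1-\e^{-m_i/n}\bigr)$, each factor of which is at most $1-\e^{-1}$; your treatment of uniformity, choosing $J=J(r,\varepsilon)$ with $r^J/J!<\varepsilon$, is just a more explicit version of the paper's ``first $K\to\infty$, then $n\to\infty$.''
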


\begin{proof}
For $J\subseteq[r]$, put $F_J=\bigcup_{j\in J}M_j$.  Suppose that the graph $F_J$ has $m\le|J|n$ edges, and note that its maximum degree is at most $r$.  Let $q_k(F_J)$ be the number of $k$-edge matchings contained in $F_J$. For each fixed $k$,
\begin{equation}\label{eq:rook}
 q_k(F_J)
 = \frac{m^k}{k!}-O_{r,k}(n^{k-1}).
\end{equation}
Indeed, start with all $k$-tuples of edges. Only
$O_{r,k}(n^{k-1})$ of them contain two edges with a common
endpoint, given that $m=O_r(n)$.

A fixed $k$-matching is contained in $\Pi$ with probability $(n-k)!/n!=n^{-k}-O_k(n^{-k-1})$.
Consequently, Bonferroni's inequalities applied to \eqref{eq:rook} give
\begin{equation}\label{eq:avoid-union}
 \Pr(\Pi\cap F_J=\varnothing)=\e^{-m/n}+o(1).
\end{equation}
Indeed, truncate inclusion--exclusion after $K$
terms.  Each truncated sum differs from the corresponding truncation of
$\exp(-m/n)$ by $O_{r,K}(n^{-1})$, while the two adjacent Bonferroni
bounds differ by at most
\[
        \frac{r^{K+1}}{(K+1)!}+O_{r,K}(n^{-1}).
\]
First let $K\to\infty$ and then $n\to\infty$.
Inclusion--exclusion over the events
$\{\Pi\cap M_i=\varnothing\}$ using \eqref{eq:avoid-union} gives that $\Pr\big(\Pi\cap M_i\ne\varnothing\text{ for every }i\in[r]\big) $ equals 
\[
 \sum_{J\subseteq[r]}(-1)^{|J|}\e^{-|F_J|/n}+o(1)=\prod_{i=1}^r \left(1- e^{-|M_i|/n}\right)+o(1)
   \le (1-e^{-1})^r+o(1). 
\]
\end{proof}


We shall also use the standard bounded-differences inequality for functions $f$ of random
permutations $\pi$: if interchanging two values of $\pi$ changes $f(\pi)$ by at
most a fixed constant, then, for every fixed $\varepsilon>0$, there is a constant $c_\varepsilon$ such that
\[
 \Pr\bigl(|f(\pi)-\mathbb Ef(\pi)|>\varepsilon n\bigr)
     \le 2\exp(-c_\varepsilon n).                               \tag{3}
\]
For completeness, this follows by exposing the values of $\pi$ one at a time
and applying Azuma's inequality; two possible continuations can be coupled by
one transposition.

\section{The conflict graph}

We are now ready to prove our result.

\begin{proof}[Proof of \tref{thm:main}]
Choose a row permutation $\pi\in\Sym_n$ uniformly at random, viewing it as
the perfect matching $\{(i,\pi(i)):i\in[n]\}$.  Define a
bipartite graph $B=B(\pi)$ whose two vertex classes are copies of $[n]$,
representing respectively the symbols of $L$ on the left and those of $P$ on the right.  Join $x$ to
$s$ if some filled cell $(i,j)$ of $P$ satisfies
\[
       L(\pi(i),j)=x
       \quad\text{and}\quad
       P(i,j)=s.                                                \tag{4}
\]
A perfect matching in the bipartite complement $\overline B$ pairs each
$L$-symbol $x$ with a $P$-symbol $s$ to which it is not adjacent.  Defining
$\sigma(x)=s$ gives precisely a symbol permutation for which
$\sigma(L(\pi(i),j))\ne P(i,j)$ in every filled cell.  We will show that some
choice of $\pi$ makes $\overline B$ have a perfect matching.

Set
\[
                 a=1-\e^{-1}.
\]
Suppose that the symbol $s$ occurs in $P$ at
$(i_1,j_1),\dots,(i_d,j_d)$.  These row indices and these column indices are
all distinct.  For a symbol $x$ of $L$, let $u_k$ be the unique row for which
$L(u_k,j_k)=x$, and put
\[
             M_{s,x}=\{(i_k,u_k):1\le k\le d\}.
\]
This is a matching: the $i_k$ are distinct, and an equality $u_k=u_\ell$ would
put $x$ twice in one row of $L$, since $j_k\ne j_\ell$.  Moreover, (4) says
exactly that
\[
               xs\in E(B)\quad\Longleftrightarrow\quad
               \pi\cap M_{s,x}\ne\varnothing.                 \tag{5}
\]
Lemma~\ref{lem:hit}
with $r=1$ gives, uniformly in $s$ and $x$,
\begin{align*}
 \mathbb E\deg_B(s)
   &=\sum_x\Pr(\pi\cap M_{s,x}\ne\varnothing)
     \le (a+o(1))n,\\
 \mathbb E\deg_B(x)
   &=\sum_s\Pr(\pi\cap M_{s,x}\ne\varnothing)
     \le (a+o(1))n.
\end{align*}
Thus
\[
               \mathbb E\deg_B(v)\le (a+o(1))n                \tag{6}
\]
for every vertex $v$ in $B$.

Next fix three distinct symbols $x_1,x_2,x_3$ of $L$.  For each $s$, the
matchings $M_{s,x_1}$, $M_{s,x_2}$ and $M_{s,x_3}$ are pairwise
edge-disjoint.
Indeed, a common edge would, by its first coordinate, arise from the same
cell of $P$, and its second coordinate would then give
$x_i=x_j$.  Lemma~\ref{lem:hit}, now with $r=3$, yields
\[
 \mathbb E\bigl|N_B(x_1)\cap N_B(x_2)\cap N_B(x_3)\bigr|
   \le (a^3+o(1))n.                                            \tag{7}
\]

Here and in (6), the $o(1)$ is uniform, as required when summing over the
symbols.

Interchanging two values of $\pi$ affects only two rows.  For a fixed
$P$-symbol, each affected row contains it at most once; for a fixed
$L$-symbol, each old or new row of $L$ contains it in exactly one column.
Thus the neighbourhood of any fixed vertex of $B$ changes in at most four
vertices.  A fixed degree is therefore $4$-Lipschitz under transpositions,
and a fixed triple common-neighbourhood size is $12$-Lipschitz.
For $n$ large, (3) followed by a union bound over the $2n$ degrees and fewer
than $n^3$ triples, shows that for every fixed $\varepsilon>0$ there is a choice of
$\pi$ for which 
\begin{align}
 \Delta(B)&\le(a+\varepsilon)n\quad\text{and},                              \tag{8}\\
 |N_B(x_1)\cap N_B(x_2)\cap N_B(x_3)|
     &\le(a^3+\varepsilon)n                                    \tag{9}
\end{align}
simultaneously for all distinct $x_1,x_2,x_3$.

Choose $\varepsilon>0$ small enough that
\[
                  a^3+\varepsilon<1-a-\varepsilon;             \tag{10}
\]
this is possible because
$ 
       (1-\e^{-1})^3<\e^{-1}$.
Fix a row permutation satisfying (8) and (9).  If $\overline B$ had no
perfect matching, Hall's theorem would give a set $X$ in its left class with
$|N_{\overline B}(X)|<|X|$.  Taking $Y=\cap_{x\in X}N_B(x)$ to be  
the complement of $N_{\overline B}(X)$  
in the right class, we obtain
\[
    X\times Y\subseteq E(B)
    \quad\text{and}\quad |X|+|Y|>n.              \tag{11}
\]
By (8), both $|X|$ and $|Y|$ are at most $(a+\varepsilon)n$; hence (11) gives
\[
    \min(|X|,|Y|)>(1-a-\varepsilon)n.                  \tag{12}
\]
For large $n$, we can choose three distinct vertices $x_1,x_2,x_3\in X$. Then (12), the definition of $Y$ and
(9) together imply
\[
 (1-a-\varepsilon)n
   <|Y|
   \le |N_B(x_1)\cap N_B(x_2)\cap N_B(x_3)|
   \le(a^3+\varepsilon)n,
\]
a contradiction of (10).  Thus $\overline B$ has a perfect matching.  As observed
above, its matching map is the required symbol permutation $\sigma$.
\end{proof}

\section{Concluding remarks}

The requirement for $n$ to be sufficiently large cannot be omitted from \tref{thm:main} as shown by the following simple example. The partial Latin square
\[\left[
\begin{array}{ccc}
1&2&\cdot\\
2&1&3  \\
\cdot&3&1\\ 
\end{array}
\right]
\]
is not avoided by any Latin square of order $3$. However, we conjecture different behaviour in the case when $P$ is itself a Latin square.

\begin{conj}\label{cj:alln}
    If $L$ and $L'$ are any Latin squares of the same order $n>1$, then there is a principal isotope of $L'$ that avoids $L$.
\end{conj}

Note that~\cjref{cj:alln} is trivially true when $L'=L$, because any cyclic permutation on the rows will work. Hence it is also true when $L'$ is a principal isotope of $L$. In particular, \cjref{cj:alln} is true when $n\le3$, which shows a difference between Latin squares and partial Latin squares. By computation, we have confirmed that \cjref{cj:alln} holds for $n\le7$. To prove it holds for a particular value of $n$, it suffices to test all possible pairs $(L,L')$ where $L$ occurs before $L'$ in a list of principal isotopism class representatives of order $n$. The number of principal isotopism classes is A123234 on the OEIS \cite{OEIS}. There are $2\,420\,400$ principal isotopism classes of order $7$.

In the spirit of \tref{thm:ACM}, we also offer this:

\begin{conj}\label{cj:genACM}
For any constant $t$ 
the following is true for every sufficiently large integers $n$. Suppose that $L$ is a Latin square of order $n$. 
Suppose that $A$ is an $n\times n$ array of subsets of $[n]$, each of size at most $t$, in which
each symbol occurs at most $t$ times among the sets in each row and column. Then there is a principal isotope of $L$ that avoids $A$.
\end{conj}

If true, \cjref{cj:genACM} would mean that when stacking layers on top of each other to try to build a Latin cube, we would be free to choose the isotopism class of each layer up to $t+1$ layers. We have been somewhat cautious in our framing of \cjref{cj:genACM} because some generalisations of \tref{thm:ACM} fail quite badly. Let $L$ be the Cayley table of the elementary abelian group of order
$n=2^k$ for arbitrarily large $k$. Let $P=[P_{ij}]$ be a partial Latin square of order $n$ with just three filled cells, namely $P_{11}=P_{22}=1$ and $P_{12}=2$. Suppose that $A=[A_{ij}]$ is an $n\times n$ array of sets, all of which are empty except that $A_{21}=\{2\}$. Then there is no completion of $P$ that avoids $A$ and is in the isotopism class of $L$.

\section*{AI declaration and acknowledgement}

This problem was proposed during the MATRIX-MFO Tandem Workshop (Combinatorial Interchange). The main ideas in the proof of \tref{thm:main} were found by ChatGPT 5.6 and then developed by the authors.

\end{document}